\newtheorem{thm}{Theorem}[section]
\newtheorem{lem}[thm]{Lemma}
\newtheorem*{thmBrown*}{Brown's criterion}
\newtheorem*{A*}{Theorem A}
\newtheorem*{B*}{Theorem B}
\newtheorem*{corollary*}{Corollary}
\newtheorem*{fpmconj*}{The $FP_m$-Conjecture}
\newtheorem{defn}[thm]{Definition}
\providecommand{\Ker}{\mathop{\rm Ker}\nolimits}
\providecommand{\Hom}{\mathop{\rm Hom}\nolimits}
\providecommand{\mod}{\mathop{\rm mod}\nolimits}
\title[Homological finiteness conditions]
{Homological finiteness conditions for a class of metabelian groups\footnote{\textrm{This work is supported by the EPSRC Grant N007238/1 on Soluble Groups and Cohomology. The first author acknowledges the support of the Institut Mittag-Leffler (Djursholm, Sweden). The second author acknowledges the support of EPSRC and the University of Glasgow. He subsequently held an Honorary Research Associateship at King's College London during the completion of this project.}}
}
\author{P. H. Kropholler and J. P. Mullaney}
\address{
   Peter H Kropholler\\
   Mathematical Sciences\\
   University of Southampton\\
   Southampton SO17 1BJ\\
   United Kingdom\\}
   \email{p.h.kropholler@soton.ac.uk\\}%
\address{
  Joseph P Mullaney\\
King's College London\\
Graduate School\\
150 Stamford Street\\
London SE1 9NH\\
United Kingdom}
\email{joseph.mullaney@kcl.ac.uk}
\subjclass[2010]{20J06}
\begin{document}
\maketitle

\begin{abstract}
We generalize a theorem of Groves and Kochloukova concerning cohomological finiteness conditions for metabelian groups in order to encompass a classical example of Baumslag and Stammbach. More precisely we shall show that for any natural number $n$ and indeterminate $x$, the group of $2\times 2$ matrices generated by 
\[\left (\begin{array} {cc}
1 & 0\\
1& 1\\
\end{array} \right ),\qquad	
\left(\begin{array}{cc}
x& 0\\
0&1\\
\end{array}\right),\qquad
\left (\begin{array} {cc}
n! & 0\\
0& 1\\
\end{array} \right ),\qquad
\left (\begin{array} {cc}
i+x & 0\\
0& 1\\
\end{array} \right )\]
for $i=1,\dots,n$ is of type $FP_{n+1}$ but not of type $FP_{n+2}$, thus  providing evidence in favour of the Bieri--Groves $FP_{m}$-conjecture. These examples are amongst the simplest known examples of torsion-free metabelian groups of their kind.
\end{abstract}

Work of Kochloukova and Groves \cite{GK06} establishes the Bieri--Groves $FP_m$ conjecture in some important special cases. We were motivated by wishing to extend these results to some examples of Baumslag, Kropholler, and Stammbach \cite{Ba72(2),BaS87,KrS90}. These examples are described in Section 2 below. The Kochloukova--Groves work does not quite fit the new context and the purpose of this paper is to show that this issue can be addressed by making some adjustments to the Kochloukova--Groves strategy. As a result, this paper follows theirs very closely, but in view of the elegance of the examples we believe it is important that this research be recorded.

\section{The Background and some Historical Remarks}

It was the work of Baumslag and Remeslennikov in the 1970s which suggested that the theory of finitely presented soluble groups might be richer than once thought. For example, it was demonstrated in \cite{Ba72(2)} that there exists a finitely presented metabelian group with a free abelian derived group of infinite rank. Remeslennikov \cite{R73} found the same example independently. This could be compared with the fact, proved in \cite{Ba73}, that every finitely generated metabelian group can be embedded in a finitely presented metabelian group. Recall that a group $G$ is said to be metabelian if there exists a short exact sequence
\begin{equation}
\label{metabelian}
A\hookrightarrow G \twoheadrightarrow Q
\end{equation}
of groups with $A$ and $Q$ abelian. Hence metabelian groups are precisely the soluble groups of derived length two.  

The group $G$ is finitely generated if and only if $Q$ is a finitely generated group and $A$ is finitely generated as a $\mathbb{Z}Q$-module via conjugation, and we will assume from now on that $G$ is finitely generated.
\par
In 1980 Bieri and Strebel \cite{BS80} characterized finitely presented metabelian groups in terms of the invariant  $\Sigma_A$. This is defined in the following way. A \emph{character} of $Q$ is a non-zero group  homomorphism $v: Q\rightarrow \mathbb{R}$; two characters are equivalent if they are positive real multiples of each other. The set  $S(Q)$ of all equivalence classes $[v]$ of characters of $Q$ can be identified with the unit sphere $S^{n-1}\subset {\mathbb{R}}^n\cong \Hom(Q, \mathbb{R})$, where $n$ is the $\mathbb{Z}$-rank of $Q$. Let $Q_{v}=\{q\in Q: v(q)\geq 0\}$. This is a monoid, so we can form the monoid ring $\mathbb{Z}Q_{v}$. We then associate to every finitely generated $\mathbb{Z}Q$-module $A$ the set
\[ \Sigma_A=\{[v] \in S(Q): A \textrm{ is finitely generated as a $\mathbb{Z}Q_{v}$-module }\}.\]

The $\mathbb{Z}Q$-module $A$ is then said to be $m$-\emph{tame} if every $m$-point subset of $\Sigma^c_A=S(Q)\setminus \Sigma_A$ lies in an open hemisphere of $S(Q)$. Equivalently, $v_1+\ldots+v_m\neq 0$ for any $[v_1],\ldots, [v_m]\in \Sigma^c_A$.

A group $G$ is said to be of type $FP_m$ if there is a $\mathbb{Z}G$-projective resolution of the trivial module $\mathbb{Z}$ where the modules are finitely generated in dimensions $\leq m$. In particular a group is of type $FP_1$ if and only if it is itself finitely generated. 
Bieri and Strebel \cite {BS80} proved that the properties of being of type $FP_2$ and being finitely presented are equivalent for metabelian groups. It is not known whether this is true for all soluble groups, but it is certainly not true for groups in general: for example, the right-angled Artin groups in \cite {BB97} are of type $FP_2$ but not finitely presented. In addition, Bieri and Strebel showed that $G$ is finitely presented if and only if the corresponding $\mathbb{Z}Q$-module $A$ is 2-tame. These results led to the $FP_m$-conjecture of Bieri and Groves.

\begin{fpmconj*}[\cite{BG82}]
\label{FP_m conjecture}
Suppose we have the short exact sequence (\ref{metabelian}) and let $m$ be a positive integer. Then $G$ is of type $FP_m$ if and only if $A$ is $m$-tame as a $\mathbb{Z}Q$-module.
\end{fpmconj*}

Both directions of the $FP_m$ conjecture remain open for $m>2$ but a number of specific cases have been proved. In \cite{BG82} it was shown that if $G$ is of type $FP_m$ then $A{\otimes}_{\mathbb{Z}} K$ is $m$-tame as a $KQ$-module for every field $K$. In
\cite{A86} Hans \AA berg established the full $FP_m$-conjecture for the case where $G$ has finite Pr\"{u}fer rank. Kochloukova \cite{K96} extended \AA berg's methods to show that the `only-if' part of the conjecture holds true if either the additive group of $A$ is torsion or if $G$ is the split extension of $A$ by $Q$. 

In general the status of the conjecture for non-split extensions seems to be more delicate and more technical than for split extensions, although as we shall see below, in many examples, there are few non-split extensions to be found.
In \cite {K96} it is shown that the full conjecture is true for the case where $A$ is torsion and of Krull dimension 1 as a $\mathbb {Z}Q$-module. More recently  Bieri and Harlander \cite{BH01} proved the $FP_3$-conjecture for the case where $G$ is the split extension of $A$ by $Q$.

\section{The examples of Baumslag--Stammbach and the desired generalization of the Groves--Kochloukova Theorem}

Baumslag's example \cite{Ba72(2)} of a finitely presented metabelian group with a free abelian normal subgroup of infinite rank was the group $G_1$ generated by the
$2\times 2$-matrices 

\[a=\left (\begin{array} {cc}
1 & 0\\
1& 1\\
\end{array} \right ),\qquad	
 s=\left (\begin{array}{cc}
1+x&0\\
0&1\\
\end{array} \right),	\qquad	
t=\left(\begin{array}{cc}
x& 0\\
0&1\\
\end{array}\right)\] 

over $\mathbb{Q}(x)$. This is a 3-generator 3-relator group with presentation
\[ G_1=\langle a,s,t: [s,t]=[a^t,a]=1, a^s=aa^t\rangle.\]
Fix a positive integer $n>1$. To this group we add the generators 

\[\left (\begin{array} {cc}
n! & 0\\
0& 1\\
\end{array} \right ),\qquad
\left (\begin{array} {cc}
i+x & 0\\
0& 1\\
\end{array} \right )\]

for all $2\leq i\leq n$. Using methods similar to Baumslag's it can be shown that the resulting metabelian group $G_n$ has a finite presentation and that its derived subgroup is torsion-free abelian of infinite rank.  We know that $G_n$ is isomorphic to the split extension $A_n\rtimes Q_n$ (and in fact by Theorem B below, every extension of $A_{n}$ by $Q_{n}$ is split), where 
\[A_n=\mathbb{Z}\left [x, x^{-1}, (1+x)^{-1},\ldots, (n+x)^{-1}, \frac{1}{n!}\right]\]
and $Q_n$ is the free abelian group with basis $\{q_{-1}, q_0,  q_1,\ldots,  q_n\}$ and an action $\circ$ on $A_n$ given by
\begin{equation} 
a\circ q_{-1}=(n!) a,\quad
a\circ q_0= xa,\quad
a\circ q_1=(1+x)a\quad
\dots \quad
a\circ q_n=(n+x)a
\end{equation}

for all $a\in A$. This action turns $A_n$ into a cyclic $\mathbb{Z}Q_n$-module. Now $A_n$ and $Q_n$ embed as subgroups of $G_n$ and so we can think of this action as conjugation in $G_n$ by the fact that
\[(1, a\circ q)= (q,0)^{-1}(1,a)(q,0).\] The $\mathbb{Z}Q_n$-module $A_n$ was first studied by Baumslag and Stammbach \cite{BaS87}, who showed that the exterior power ${\bigwedge}_{\mathbb{Z}}^i A_n\cong H_i(A_n,\mathbb{Z})$ is a non-free finitely generated  $\mathbb{Z}Q_n$-module for $1\leq i\leq n$, is a free $\mathbb{Z}[1/n!]Q_n$-module of finite rank for $i=n+1$ and is free of infinite rank for $i\geq n+2$. By studying the action of $G_n$ on a subcomplex of a product of trees indexed by a finite set of discrete (i.e. image $\mathbb{Z}$) characters of $G_n$ we can show that $G_n$ is a group of type $FP_{n+1}$. In fact we prove a more general result from which this can be deduced as a corollary.

\begin{A*} 
\label{Theorem A}
Let $Q=Q_0\times Q_1\times\ldots \times Q_l$ be a finitely generated free abelian group, where $Q_0=\langle q_{-1}\rangle$ and $Q_i$ is a free abelian group with basis ${\{q_{i,j}\}}_{0\leq j\leq z_i}$ for $1\leq i\leq l$. Let $A$ be a finitely generated (right) $\mathbb{Z}Q$-module and assume that the action of $\mathbb{Z}Q$ on $A$ factors through an action of a quotient $M=M_0\otimes M_1\otimes\ldots\otimes M_l$, where $M_i=\mathbb{Z}Q_i/I_i$, $I_0=\langle q_{-1}-k\rangle$ where $k$ is some positive integer and, for $1\leq i\leq l$, $I_i$ is generated as an ideal by ${\{q_{i,j}-f_{i,j}\}}_{0\leq j\leq z_i}$, where for fixed $i$ the $f_{i,j}$ are irreducible non-constant monic polynomials in $\mathbb{Z}[q_{i,0}]$ that are pairwise coprime in $\mathbb{Z}[q_{i,0}, 1/k]$ and $f_{i,0}=q_{i,0}$. Assume further that $A$ is free as an $M$-module. Then the split extension $G$ of $A$ by $Q$ is of type $FP_m$, where $m=\min\{rk(Q_i): 1\leq i\leq l\}$.
\end{A*}

\begin{corollary*}
\label{corollary to Theorem A}
The group $G_n$ is of type $FP_{n+1}$.
\end{corollary*}

\begin{proof} Note that no two of the polynomials $x, 1+x,\ldots, n+x$ generate a proper ideal in $\mathbb{Z}[x, 1/n!]$. Take $l=1$, $z_1=n$, $q_{1,j}=q_j$,  $f_{1,j}=x+j$ and $k=n!$. Then $M=M_0\otimes M_1\cong A_n$.
\end{proof}

The method of our proof originated with \AA berg \cite{A86} and closely follows that of a similar result by Groves and Kochloukova \cite[Theorem 5]{GK06}, of which ours is a generalization. In essence we have taken their result and showed it to be true when we invert an appropriate integer $k$ as well as inverting a set of polynomials that are pairwise coprime (in the sense that no two of them lie in a proper ideal of $\mathbb{Z}[q_{i,0},1/k]$.) The set of characters that we choose in our proof has the property that the set of equivalence classes of the restrictions of each character to $Q$ is contained in $\Sigma^c_A$. After constructing a $G$-tree for each character, and taking the product $X$ of these, we prove three properties of an appropriate subspace $Y$ of $X$:

(i) $G$ acts co-compactly on $Y$;\\
(ii) $Y$ is $(m-1)$-connected;\\
(iii) the stabilizers in $G$ of cells in $Y$ are of type $FP_m$.\\

That $G$ is of type $FP_m$ is then implied by a criterion of Brown \cite{KSB}. Hence the group $G_n$ is of type $FP_{n+1}$. It follows from Kochloukova's `only if' result \cite{K96} that $A_n$ is $(n+1)$-tame as a $\mathbb{Z}Q_n$-module, but by our choice of characters we see that it is not $(n+2)$-tame and so $G_n$ is not of type $FP_{n+2}$. In this way we have further evidence that the $FP_m$-conjecture may be true. 

It is desirable to have a direct proof of the tameness of $A_n$ and this will be the subject of a separate paper.

One consequence of the Bieri-Strebel theorems is that whether or not a metabelian group $G$ is finitely presented depends only upon the $Q$-module $A$, and not on the extension class in $H^2(Q,A)$. In particular, $G$ is finitely presented if and only if the split extension $A\rtimes Q$ is finitely presented. Using the Lyndon-Hochschild-Serre spectral sequence we have been able to show that $H^2(Q_n,A_n)=0$ and so \emph{every} extension of $A_n$ by $Q_n$ is split. In general we have the following.

\begin{B*}
\label{Theorem C}
Let $A$, $Q$ and the positive integer $k$ be as in Theorem A with $l=1$, and let $z_1=n$, $q_{1,j}=q_j$ and $f_{1,j}=f_j$. Then $H^2(Q,A)$ is cyclic of order dividing $k-1$. In the special case of $A_{n}$ and $Q_{n}$, the cohomology group vanishes. 
\end{B*} 

\section{The set $V$ of characters and the space $Y$}
From now on we are in the situation described in Theorem A. We shall write $q_j$ for $q_{1,j}$, $f_j$ for $f_{1,j}$ and $z_1=n$ so that $Q_1$ has rank $n+1$. Hence $n+1\geq m$. Let $\widetilde{M}=M_0\otimes M_1$; then
\[\widetilde{M}\cong\mathbb{Z}\left [q_0, {q_0}^{-1}, f_1^{-1},\ldots, f_n^{-1}, k^{-1}\right ]\]
where $q_j$ acts as multiplication by $f_j$ for $j\ge1$ and $q_{-1}$ acts as multiplication by the positive integer $k$. Let $\widetilde{Q}=Q_0\times Q_1$ and let $\phi:\mathbb{Z}\widetilde{Q}\rightarrow \widetilde{M}$ be the ring homomorphism sending $q_{-1}$ to $k$ and $q_i$ to $f_i$ for $0\leq i\leq n$. The restriction of $\phi$ to $\widetilde{Q}$ is a group homomorphism $\tau$ from the free abelian group $\widetilde{Q}$ to the group of units $(\widetilde{M})^{\times}$ of $\widetilde{M}$. Crucially, $\tau$ is injective,
so we may identify $\widetilde{Q}$ with its isomorphic copy in $(\widetilde{M})^{\times}$.  We shall think of $\widetilde{M}$ as a subring of the field of rational functions $\mathbb{Q}(q_0)$.

\begin{defn} Let $R$ be a non-trivial commutative ring and let $\mathbb{R}_{\infty}$ denote the set of real numbers together with an additional point $\infty$. Then a \emph{valuation} on $R$ is a map $v: R\rightarrow \mathbb{R}_{\infty}$ satisfying $v(0)=\infty$, $v(1)=0$, $v(ab)= v(a)+v(b)$ and $v(a+b)\geq \min\{v(a),v(b)\}$ for all $a,b \in R$.
\end{defn}

We then define $v_i$, for each $0\leq i\leq n$, to be the $f_i$-adic valuation on $\mathbb{Q}(q_0)$; that is, the unique valuation on $\mathbb{Q}(q_0)$ that is zero on $\mathbb{Q}\setminus \{0\}$ and satisfies $v_i(f_i)=1$ . In addition we define, for polynomials $g,h\in \mathbb{Z}[q_0]$,
\[v_*(g/h)= \deg(h)-\deg(g)\] and it is easily seen that this too is a valuation. So $v_*$ and the $v_i$ are group homomorphisms into the group of rational integers $\mathbb{Z}$ and their restrictions to $\widetilde{Q}$ provide us with a set $V$ of $n+2$ discrete characters of $\widetilde{Q}$. If in addition we define $v(Q_i)=0$ when $i>1$ for each $v\in V$ then we get a set of discrete characters of the group $Q$. Note that $v(Q_0)=0$. Each $v\in V$ can be extended to a character of $G$ via composition with the natural projection $\pi: G\twoheadrightarrow Q$.

Using the methods of Groves and Kochloukova in \cite{GK06} one constructs a tree $\Gamma_v$ corresponding to each $v\in V$. This tree is equipped with a map to $\mathbb R$ which is related to the valuation $v:Q\to\mathbb R$.  Thus there is a map $h:X\to\mathbb R^{n+1}$ where $X$ is the product of these trees and $n=rk(Q_1)-1$. 
In their reasoning, Groves and Kochloukova introduce an integer $\beta$, see \cite[\S4.1, Paragraph 3]{GK06}. In our context we need to
 set
\[ \beta= -\left(2+\sum_{i=0}^n d_i\right)\] where $d_i=\deg(f_i)$. 
We must also choose a set of generators $a_1,\ldots, a_d$ of $A$ as a $Q$-module that is a basis of $A$ as a free $M$-module. With this we have built $n+2$ trees $\Gamma_{v_*},\Gamma_{v_0},\ldots, \Gamma_{v_n}$ and from these we can construct the \AA berg CW-complex $Y=h^{-1}(W)$ described in \cite[\S4.2]{GK06}, where $W$ is the subspace spanned by those elements of $\mathbb R^{n+2}$ that arise as valuations of elements of $Q_1$.

By showing that $Y$ has the three properties
\indent
\begin{quote}
(i) {\bf [Cocompactness]} $G$ acts co-compactly on $Y$;
\newline
(ii) {\bf [Connectivity]} $Y$ is $(m-1)$-connected, that is $Y$ is path connected and the homotopy groups 
$\pi_1(Y),\ldots, \pi_{m-1}(Y)$
 are trivial; and
\newline
(iii) {\bf [Homological Finiteness]} the stabilizers in $G$ of cells in $Y$ are of type $FP_m$
\end{quote}
then Groves and Kochloukova can use the classical criterion of Brown.

\begin{thmBrown*}[(\cite{KSB} Proposition 1.1)]  Let $G$ be a group acting on a CW-complex $Y$ via permutation of the cells and such that for every cell the stabilizer of the cell fixes the vertices pointwise. Assume further that $Y$ is $(m-1)$-acyclic, the stabilizers in $G$ of cells of dimension $i\leq m$ are always of type $FP_{m-i}$ and $G$ acts co-compactly on $Y$. Then $G$ is of homological type $FP_m$.
\end{thmBrown*}

We reason in exactly the same way in this paper. 

\section{Cocompactness}

Observe that $W$ is the $(n+1)$-dimensional subspace 
\[W=\{(y_{-1},y_0,y_1,\ldots, y_n) \in \mathbb{R}^{n+2}:  y_{-1}+\sum_{i=0}^n d_i y_i=0\}.\]

\begin{defn}
For a subset $B$ of $\prod_{v\in V}\mathbb{R}$ we define
\[[[B]]=\left\{\prod_{v\in V} \lceil b_v\rceil: \prod_{v\in V} b_v \in B\right\}\]
where $\lceil b_v\rceil$ denotes the least integer greater than or equal to $b_v$.

\end{defn}

\begin{lem}[(\cite{GK06} Lemma 7)]
\label{Q acts co-finitely}
If  $\prod_{v\in V}s_v\in [[W]]$, then 
\[0\leq s_{v_*}+\sum_{i=0}^n d_i s_{v_i} <1+\sum_{i=0}^n d_i.\]
Hence $Q$ acts co-finitely on $[[W]]$.
\end{lem}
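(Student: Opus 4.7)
The plan is to reduce both statements to the explicit description of $W$ derived just before the lemma, namely
\[
W=\bigl\{(y_w,y_{v_0},\ldots,y_{v_n})\in\mathbb{R}^{n+2}:y_w+\sum_{i=0}^n d_i y_{v_i}=0\bigr\}.
\]
Given $\prod_{v\in V}s_v\in[[W]]$, by definition there exists $(b_v)_{v\in V}\in W$ with $s_v=\lceil b_v\rceil$ for each $v$; writing $s_v=b_v+\epsilon_v$ with $\epsilon_v\in[0,1)$ and substituting into the linear relation defining $W$ gives
\[
s_w+\sum_{i=0}^n d_i s_{v_i}=\epsilon_w+\sum_{i=0}^n d_i\epsilon_{v_i},
\]
which is non-negative and strictly less than $1+\sum_{i=0}^n d_i$. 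This settles the inequality.

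For the co-finiteness statement I would first pin down how $Q$ acts on $[[W]]$. From the proof of Lemma 3.1 any $g\in G$ (and hence any $q\in Q$) shifts the $v$-coordinate of a point of $X$ by $v(q)$, so $Q$ acts on $[[W]]$ by integer translation through the map $q\mapsto(v(q))_{v\in V}$. Using $v_i(q_j)=\delta_{ij}$, $w(q_j)=-d_j$ for $0\le j\le n$, and the fact that $q_{-1}$ and the generators of $Q_i$ for $i>1$ all lie in the kernel of every $v\in V$, one sees the image of this map is exactly $W\cap\mathbb{Z}^{n+2}$; specifically, an exponent tuple $(l_0,\ldots,l_n)\in\mathbb{Z}^{n+1}$ on $q_0,\ldots,q_n$ produces the translation vector $\bigl(-\sum_i d_i l_i,\,l_0,\ldots,l_n\bigr)$. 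Given $(s_v)\in[[W]]$, I would then translate by the element with $l_i=-s_{v_i}$, which sends each $s_{v_i}$ to $0$ and shifts the $w$-coordinate to the integer $s_w+\sum_i d_i s_{v_i}\in\{0,1,\ldots,\sum_i d_i\}$ by the inequality just proved. Every $Q$-orbit thus meets a fixed finite set of points, and co-finiteness follows.

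The only real bookkeeping concern is to confirm that the $Q$-action genuinely preserves $[[W]]$ with the expected sign conventions, and that the image of $Q$ is the full lattice $W\cap\mathbb{Z}^{n+2}$ rather than a proper sublattice. Both follow directly from the explicit formulas for $w$ and the $v_i$ recorded in Section 4 together with Lemma 4.1 (injectivity of $\tau$), so I do not anticipate any substantive obstacle; the proof is essentially a computation with the single linear form $y_w+\sum d_i y_{v_i}$.
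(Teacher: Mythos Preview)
Your proof is correct and follows essentially the same route as the paper: write each $s_v$ as a point of $W$ plus a fractional part in $[0,1)$ to obtain the inequality, then use the translation action of $Q$ to zero out the last $n+1$ coordinates and invoke the inequality to bound the remaining $w$-coordinate. Your formula $w(q_j)=-d_j$ is in fact more accurate than the paper's displayed computation (which tacitly writes $-(k_0+\cdots+k_n)$ as though all $d_j=1$), and your appeal to Lemma~4.1 is harmless but unnecessary---surjectivity of $Q$ onto the last $n+1$ integer coordinates already follows from $v_i(q_j)=\delta_{ij}$.
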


For $v\in V$ let $(Q_1)_v$ be the submonoid of $Q_1$ of all elements $q$ with $v(q)\geq 0$ and let $(M_1)_v$ be the $\mathbb{Z}(Q_1)_v$-submodule of $M_1$ generated by the image of $1_{Q_1}$ in $M_1$.

\begin{lem}[(\cite{GK06} Lemma 6)]
\label{monoid is positive half-sphere}
For $v\in V$ we have $(M_1)_v=\{m\in M_1: v(m)\geq 0\}$.
\end{lem}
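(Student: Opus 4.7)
The plan is to prove the two inclusions separately. The forward inclusion $(M_1)_v \subseteq \{m \in M_1 : v(m) \geq 0\}$ is essentially formal: each of the characters $w, v_0, \ldots, v_n$ is a valuation that vanishes on $\mathbb{Q}^\times$ and is non-negative on every generator of $(Q_1)_v$, so the ultrametric inequality forces $v(m) \geq 0$ on every $\mathbb{Z}(Q_1)_v$-linear combination of $1_{M_1}$.

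For the reverse inclusion I would exploit the identification $M_1 \cong \mathbb{Z}[q_0^{\pm 1}, f_1^{-1}, \ldots, f_n^{-1}] \subseteq \mathbb{Q}(q_0)$ and write any $m \in M_1$ as
\[ m = \frac{g(q_0)}{q_0^{a_0}\, f_1^{a_1} \cdots f_n^{a_n}} \]
with $g \in \mathbb{Z}[q_0]$ and $a_0, \ldots, a_n \geq 0$. The argument then splits on the type of character.

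If $v = v_i$, then the hypotheses on the $f_j$ (irreducible, monic, pairwise coprime over $\mathbb{Q}[q_0]$) give $v_i(q_0) = \delta_{i0}$ and $v_i(f_j) = \delta_{ij}$ for $j \geq 1$, so $v_i(m) \geq 0$ becomes $v_i(g) \geq a_i$. Since $f_i$ is monic this divisibility lifts from $\mathbb{Q}[q_0]$ to $\mathbb{Z}[q_0]$ by Gauss's lemma; writing $g = f_i^{a_i} h$, cancelling, and expanding $h$ as an integer polynomial in $q_0$ displays $m$ as an integer combination of monomials whose $f_i$-exponent is zero, each of which pulls back to $(Q_1)_{v_i}$. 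If $v = w$, the hypothesis rewrites as $\deg g \leq a_0 + \sum_{j \geq 1} d_j a_j$, and expanding $g = \sum c_l q_0^l$ directly displays $m$ as an integer combination of monomials $q_0^{l - a_0} f_1^{-a_1} \cdots f_n^{-a_n}$, each with $w$-value $a_0 + \sum_{j\geq 1} d_j a_j - l \geq 0$ whenever $c_l \neq 0$; hence each summand lies in $(M_1)_w$.

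The main technical point is the promotion of divisibility from $\mathbb{Q}[q_0]$ to $\mathbb{Z}[q_0]$, which is handled uniformly by monicity of the $f_j$. Everything else reduces to bookkeeping with exponents of monomials against the defining inequalities of $(Q_1)_v$.
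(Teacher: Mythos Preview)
Your argument is correct and follows essentially the same idea as the paper's proof: express an arbitrary $m\in M_1$ as an integer combination of images of $Q_1$-monomials and read off the valuation term by term. The paper packages this slightly more uniformly---writing $m=h(q_0,q_0^{-1})\,f_1^{\alpha_1}\cdots f_n^{\alpha_n}$ with $h$ coprime to each $f_i$ and observing $v(m)=\min_i v(q_0^i q)$ for every $v\in V$ at once---whereas you split into the cases $v=v_i$ and $v=w$ and invoke Gauss's lemma explicitly; the content is the same.
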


\begin{thm} [(c.f. \cite{GK06} Theorem 6)] 
\label{adjustment}
For every $\prod_{v\in V}s_v$ in $[[W]]$ and every set 
\newline 
$\{a_v\in A: v\in V\}$, there exists an element $a\in A$ such that $\prod_{v\in V}[(a_v,s_v)]=\prod_{v\in V}[(a,s_v)]$. Consequently for every $\prod_{v\in V}r_v\in W$ such that $[[\prod_{v\in V}r_v]]=\prod_{v\in V}s_v$ one has $\prod_{v\in V}[(a_v,r_v)]=\prod_{v\in V}[(a,r_v)]$.

\end{thm}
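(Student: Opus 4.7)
The strategy is to translate the statement into a simultaneous congruence problem inside the ring $M$, then solve it via a Chinese Remainder / weak approximation argument that exploits the pairwise coprimality of $q_0, f_1, \ldots, f_n$ in $\mathbb{Z}[q_0, 1/k]$. By Lemma~2.2 applied at each $v \in V$, the equality $[(a_v, s_v)] = [(a, s_v)]$ at the integer height $s_v$ is equivalent to $a - a_v \in A_v \circ q_v^{\beta + s_v}$. Since the generators $a_1,\ldots,a_d$ form an $M$-basis of $A$, and the preceding lemma identifies $(M_1)_v$ with $\{m \in M_1 : v(m) \geq 0\}$ (with an analogous identification for $M$, using that $v(k) = 0$ for every $v \in V$), one has
\[
A_v \circ q_v^{\beta + s_v} \;=\; \bigoplus_{j=1}^{d} a_j \cdot \bigl\{\mu \in M : v(\mu) \geq \beta + s_v\bigr\}.
\]
Writing $a_v = \sum_j a_j \mu_{v,j}$ and $a = \sum_j a_j \mu_j$ uniquely in the $M$-basis, the problem splits into $d$ independent scalar problems. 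Thus it suffices to prove the scalar CRT: given $\{\mu_v\}_{v \in V}$ with each $\mu_v \in M$, there exists $\mu \in M$ with $v(\mu - \mu_v) \geq \beta + s_v$ for every $v \in V$.

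The $n+2$ valuations in $V$ are pairwise inequivalent on the fraction field $\mathbb{Q}(q_0) \supset M$: each $v_i$ is the $f_i$-adic valuation with distinct irreducible $f_i$, while $w$ is the degree-at-infinity valuation. For each pair $v_i, v_j$ of finite valuations, the hypothesised pairwise coprimality of $f_i, f_j$ in $\mathbb{Z}[q_0, 1/k] \subset M$ yields a Bezout identity realising the comaximality $\{m : v_i(m) \geq \beta + s_{v_i}\} + \{m : v_j(m) \geq \beta + s_{v_j}\} = M$. For pairs involving $w$, use that $q_0 \in M^{\times}$ is coprime to each $f_i$ in $\mathbb{Z}[q_0, 1/k]$, so a Bezout identity between a suitable power of $q_0^{-1}$ and a power of $f_i$ furnishes the required splitting of $1$ into a low-degree part and a part of large $v_i$-valuation. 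Weaving these pairwise comaximalities together --- either by iterated CRT, or, more concretely, by an explicit partial-fraction construction of $\mu$ --- produces a single $\mu \in M$ simultaneously approximating all of the $\mu_v$ to the required precisions.

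The final sentence of the theorem is then immediate from Lemma~2.2: once $a$ satisfies $[(a_v, s_v)] = [(a, s_v)]$ for the integer $s_v$, we have $a - a_v \in A_v \circ q_v^{\beta + s_v}$, so in the intersection description $L_{a,v} \cap L_{a_v, v} = \{[(a,r)] = [(a_v,r)] : r \leq z_0\}$ we get $s_v \leq z_0$; since $r_v \leq \lceil r_v \rceil = s_v$, the equality $[(a_v, r_v)] = [(a, r_v)]$ holds at the real height $r_v$ as well. The main obstacle is the scalar CRT step: the subgroups $I_v := \{m \in M : v(m) \geq \beta + s_v\}$ are only $(M_1)_v$-submodules of $M$ and not ideals, so the CRT must be executed with care, and the resulting approximation must land inside $M$ rather than merely inside $\mathbb{Q}(q_0)$. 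The invertibility of $k$ is precisely what converts the pairwise coprimality hypothesis into Bezout identities with coefficients in $M$, and the bound on $s_w + \sum_i d_i s_{v_i}$ supplied by the preceding lemma keeps the degree/pole complexity of the resulting $\mu$ under control.
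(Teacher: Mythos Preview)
Your plan identifies the right skeleton---reduce to a scalar problem via the free $M$-basis, rephrase $[(a_v,s_v)]=[(a,s_v)]$ as $a-a_v\in A_v\circ q_v^{\beta+s_v}$, and solve by a Chinese-Remainder argument using the pairwise coprimality of the $f_i$ in $\mathbb{Z}[q_0,1/k]$---and this is precisely the paper's strategy. The deduction of the real-height statement from the integer-height one via the line-intersection lemma is also as in the paper.

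Where the paper differs is in how it disposes of the obstacle you flag. Rather than attacking the system $v(\mu-\mu_v)\ge\beta+s_v$ head-on for arbitrary $(s_v)$ and treating the non-ideal subgroups $I_v$ and the place $w$ by separate Bezout splittings, the paper first observes that the conclusion is $G$-equivariant: if it holds for one $G$-translate of the vertex it holds for all. Acting by $Q$ normalises $s_{v_0}=\cdots=s_{v_n}=0$, which together with Lemma~4.2 forces $0\le s_w<-\beta$, hence $s_v+\beta<0$ for every $v$; the target inequalities then weaken to $v(a-a_v)\ge 0$, and the awkward fractional ``ideals'' disappear. Acting by $A$ further arranges $a_w=0$, whereupon the $w$-condition becomes just $w(a)\ge 0$, i.e.\ a degree bound. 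After these reductions the paper clears denominators by $F^t=(\prod_i f_i)^t$, lands in $\mathbb{Z}[q_0,1/k]$ where the $(f_i^t)$ are genuine pairwise comaximal ideals, applies ordinary CRT there, and notes that the unique solution modulo $F^t$ has degree strictly less than $\deg F^t$---so dividing back by $F^t$ gives $w(a)>0$ automatically. Your direct weak-approximation route should also go through, but the $G$-translation trick is exactly what turns your ``CRT executed with care'' into a clean two-line computation and makes the condition at $w$ a free byproduct rather than an extra Bezout step.
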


\begin{proof} 
By the same argument as Theorem 6 in \cite{GK06}, we find that, using Lemma \ref{monoid is positive half-sphere}, it will suffice to find an $a\in A$ such that $v(a-a_v)\geq 0$ for each $v$. 
Let $F(q_0)=\prod_{i=0}^n f_i(q_0).$ Then, for some $t$, we have $F^t a_v\in \mathbb{Z}[q_0]$ for every $v\in\{v_0, v_1,\ldots, v_n\}$. Since the $f_i$ are all pairwise co-prime as elements of $\mathbb{Z}[q_0, 1/k]$ we can apply the Chinese Remainder Theorem for commutative rings to get a unique solution mod $F^t \mathbb{Z}[q_0, 1/k]$ to the congruences
\[a'\equiv a_{v_i}F^t \mod {f_i}^t  \mathbb{Z}[q_0, 1/k]\]
for $0\leq i\leq n$. This $a'\in A$ must have degree less than that of $F^t$. Set $a=a'F^{-t}$. Then
\[v_i(a-a_{v_i})=v_i(a'F^{-t}-a_{v_i})=v_i(F^{-t}(a'-a_{v_i} F^{t})=-t+v_i(a'-a_{v_i} F^{t}) \]
and since ${f_i}^t$ divides $a'-a_{v_i} F^t$ in $\mathbb{Z}[q_0, 1/k]$ we have $v_i(a-a_{v_i})\geq 0$. Recall that $a_w=0$. Thus
\[v_*(a-a_w)=v_*(a)=v_*(a'F^{-t})=\deg(F^t)-\deg(a')>0.\]

This completes the proof.

\end{proof}

\begin{thm} The group $G$ acts co-compactly on $Y$.
\end{thm}

\begin{proof}
This follows from Theorem \ref{adjustment}, using an argument identical to that of Theorem 7 in \cite{GK06}.
\end{proof}

\section{Connectivity}

\begin{lem}[(\cite{GK06} Lemma 8)] 
Every set of $m$ elements of $V$ lies in an open half-space of $\Hom(Q,\mathbb R)$
\end{lem}

Therefore, using \cite[Proposition III.3.3]{A86}, we have:

\begin{lem} $Y$ is $(m-1)$-connected.
\end{lem}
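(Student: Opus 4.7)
The plan is to adapt the \AA berg--Groves--Kochloukova connectivity argument to the present setting. Each tree $\Gamma_v$ is contractible, and so is the product $X = \prod_{v\in V}\Gamma_v$; the subspace $Y = h^{-1}(W)$ is cut out of $X$ by the hyperplane $W \subset \mathbb{R}^{n+2}$, and the task is to show that it inherits enough connectivity from the ambient contractibility of $X$.

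First I would introduce a $G$-invariant exhausting filtration $Y = \bigcup_N Y_N$ by $G$-cocompact subcomplexes, obtained by bounding a linear height function on $W$ and pulling back along $h$; cocompactness of each piece follows from Theorem 4.5. Since a directed union of $(m-1)$-connected subcomplexes is $(m-1)$-connected, it suffices to prove that each inclusion $Y_N \hookrightarrow Y_{N+1}$ is $(m-1)$-connected.

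The second step is a Bestvina--Brady style discrete Morse argument on the cells added between $Y_N$ and $Y_{N+1}$. Using Theorem 4.4 to normalize the attaching data of each new cell to the form $\prod_{v \in V}[(a, r_v)]$ with a single $a \in A$, the descending link takes the shape of a join $\ast_{v \in V'}\Lambda_v$, where $V' \subseteq V$ indexes the characters active at the cell and $\Lambda_v$ is a local link-piece in $\Gamma_v$. The pairwise coprimality of the $f_j$ in $\mathbb{Z}[q_0, 1/k]$ and the freeness of $A$ as an $M$-module, together with the Chinese Remainder construction from the proof of Theorem 4.4, ensure that each $\Lambda_v$ is non-empty.

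The main obstacle will be the combinatorial bookkeeping required to guarantee $\vert V' \vert \geq m$ at every attached cell; here the hypothesis $m \leq n+1$, equivalently $\vert V \vert = n+2 \geq m+1$, becomes essential. Once this is established, $\ast_{v \in V'}\Lambda_v$ is a join of at least $m$ non-empty factors and is therefore $(m-2)$-connected, which by the standard Morse-theoretic lemma upgrades to $(m-1)$-connectivity of $Y_N \hookrightarrow Y_{N+1}$. Passing to the directed limit over $N$ then yields the lemma.
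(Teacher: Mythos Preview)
The paper's proof is two citations: \AA berg's Proposition III.3.3 reduces $(m-1)$-connectedness of $Y$ to the purely combinatorial statement that every $m$-element subset of $V$ lies in an open halfspace of $\operatorname{Hom}(Q,\mathbb{R})$, and Groves--Kochloukova (\cite{GK06}, Lemma 8) verifies this halfspace condition for the specific characters $w,v_0,\ldots,v_n$ built in Section~3. Your outline is, in spirit, an attempt to reprove \AA berg's criterion from scratch via a Morse argument; that is a legitimate alternative route, but as written it has a genuine gap and two side confusions.

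The gap is at the step you yourself flag as the main obstacle: ensuring $\lvert V'\rvert \geq m$ for the descending link at every newly attached cell. You offer only the cardinality bound $\lvert V\rvert = n+2 \geq m+1$ as the reason. That is necessary but far from sufficient: a set of $n+2$ characters can perfectly well contain an $m$-subset admitting a nontrivial nonnegative linear relation (take any pair $v,-v$, for instance), in which case the corresponding link collapses. What is actually required is precisely \AA berg's open-halfspace hypothesis on $m$-subsets of $V$, and for the particular $V$ here one must check it using the specific structure of the characters --- namely that the unique positive relation $w + \sum_i d_i v_i = 0$ involves all $n+2$ of them, so any proper subset of size $\leq n+1 \geq m$ lies in an open halfspace. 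You neither state this condition nor verify it; without it the Morse argument does not go through.

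Two smaller points. The Chinese Remainder argument of Theorem~4.4 is not what makes the link-pieces $\Lambda_v$ non-empty: in each tree $\Gamma_v$ every vertex has a unique downward neighbour, so $\Lambda_v$ is automatically a point. Theorem~4.4 belongs to the cocompactness proof, not to connectivity. And your filtration sentence is off: bounding a linear height on $W$ gives sublevel sets that are not $G$-invariant (since $Q$ translates $W$), so Theorem~4.5 does not make each $Y_N$ cocompact. That does not matter for the connectivity conclusion, but the appeal to Theorem~4.5 should be dropped.
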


\section{Homological Finiteness}
It remains to show that if $P$ is the stabilizer in $G$ of a cell in $Y$ then $P$ is of type $FP_m$. The following lemma implies that it is enough to prove that the stabilizer of a vertex in $X$ lying in $h^{-1}[[W]]$ is of type $FP_m$. 

\begin{lem}[(\cite{K96} Lemma 2.9)] If $\Gamma$ is a cell of $Y$ then the stabilizer of $\Gamma$ in $G$ coincides with the stabilizer in $G$ of a vertex of $X$ lying in $h^{-1}([[W]])$. 
\end{lem}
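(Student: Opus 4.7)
The plan is to associate to each cell $\Gamma$ of $Y$ a canonical ``top'' vertex $p_\Gamma$ of $X$ lying in $h^{-1}([[W]])$ and then verify that $\mathrm{Stab}_G(\Gamma) = \mathrm{Stab}_G(p_\Gamma)$.

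First, any cell $\Gamma$ of $Y$ is contained in a unique open cell $\sigma = \prod_{v \in V} \sigma^v$ of $X$, where each $\sigma^v$ is either a vertex $[(a_v, n_v)]$ of $\Gamma_v$ or the open edge joining $[(a_v, n_v)]$ and $[(a_v, n_v+1)]$; the representative $a_v$ can be chosen so that both endpoints of an edge lie on the line $L_{a_v, v}$, since every edge of $\Gamma_v$ is contained in some line. Set $s_v = n_v$ in the vertex case and $s_v = n_v + 1$ in the edge case, and define $p_\Gamma := \prod_{v \in V} [(a_v, s_v)]$. For any interior point $x = \prod_{v\in V} [(a_v, r_v)]$ of $\sigma$ lying in $\Gamma$, $h(x) = \prod_{v\in V} r_v \in W$ since $\Gamma \subseteq Y$, and $\lceil r_v \rceil = s_v$ in every coordinate, so $\prod_{v\in V} s_v = [[h(x)]] \in [[W]]$ and thus $p_\Gamma \in h^{-1}([[W]])$.

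For the forward stabilizer inclusion, note that since $G$ permutes open cells of $X$ one has $\mathrm{Stab}_G(\Gamma) \subseteq \mathrm{Stab}_G(\sigma)$. If $g$ stabilizes $\sigma$, then because $g$ acts on $\Gamma_v$ by an automorphism shifting $h_v$ by $v(g)$, stabilizing the compact cell $\sigma^v$ forces $v(g) = 0$; with zero shift, the two endpoints of an edge, having distinct $h_v$-values, must each be fixed. In particular $g$ fixes $[(a_v, s_v)]$ for every $v$, so $g \in \mathrm{Stab}_G(p_\Gamma)$.

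The substantive step is the reverse inclusion. Suppose $g$ fixes $p_\Gamma$. Then $v(g) = 0$ for each $v \in V$ and $a_v g a_v^{-1} \in q_v^{-s_v} G(v) q_v^{s_v}$. For each edge factor one needs in addition that $a_v g a_v^{-1} \in q_v^{-(s_v-1)} G(v) q_v^{s_v-1}$, so that $g$ also fixes the lower endpoint. Conjugating by $q_v^{s_v-1}$, this reduces exactly to the inclusion $q_v^{-1} G(v) q_v \subseteq G(v)$ recorded in Section~1, so it holds. A tree automorphism fixing two vertices fixes the geodesic between them, so $g$ fixes each closed cell $\sigma^v$ pointwise, hence fixes $\sigma$ and $\Gamma$ pointwise. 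The one point requiring care is the asymmetry of the monoid inclusion $q_v^{-1} G(v) q_v \subseteq G(v)$, which forces the use of the upper corner in the definition of $p_\Gamma$; the lower corner would demand the unavailable opposite inclusion.
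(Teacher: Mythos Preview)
Your argument is correct and identifies the same canonical vertex as the paper---the ``top'' corner of the ambient product cube---but the route to it is organised differently. The paper first shows that the setwise stabilizer of $\Gamma$ equals its pointwise stabilizer by letting $g$ permute the vertices $\gamma_1,\dots,\gamma_t$ of $\Gamma$ (these are vertices of the $Y$--cell, not of $X$) and summing the identities $s_{v,\rho(i)}=s_{v,i}+v(g)$ over $i$ to force $v(g)=0$. It then passes to $\Lambda=\{\,[[h(S)]]:S\in\Gamma\,\}$ and uses an averaging trick, observing that the barycentre $(1/r)(S_1+\dots+S_r)$ has ceiling equal to the top vertex $T$ of the ambient $(n{+}2)$--cube, to get $T\in\Lambda$; the reverse inclusion $\mathrm{Stab}(T)\subseteq\mathrm{Stab}(\Gamma)$ is left implicit.

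You instead pass immediately to the ambient open $X$--cell $\sigma\supseteq\Gamma$, read off $v(g)=0$ from the fact that the $g$--action shifts $h_v$ by $v(g)$ on the bounded factor $\sigma^v$, and then make explicit the step the paper suppresses: fixing the upper endpoint of an edge forces fixing the lower one because $q_v^{-1}G(v)q_v\subseteq G(v)$, hence $q_v^{-s_v}G(v)q_v^{s_v}\subseteq q_v^{-(s_v-1)}G(v)q_v^{s_v-1}$. This bypasses both the vertex--permutation sum and the averaging argument, and your remark about the necessary asymmetry (the lower corner would require the opposite inclusion, which fails) is exactly the point. The two arguments are equivalent in content; yours is somewhat more direct and makes the dependence on the monoid inclusion from Section~1 transparent.
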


By replicating the argument of Section 4.5 in \cite{GK06} we can see that an element $aq\in P$ if and only if
\[a\in A_{v_*}\circ {q_{v_*}}^{s_{v_*}+\beta}\cap A_{v_0}\circ {q_{v_0}}^{\beta}\cap \ldots \cap A_{v_n}\circ {q_{v_n}}^{\beta} \textrm{ and } q\in G(v) \textrm{ for all } v\in V.\]

The condition on $q\in Q$ is satisfied precisely when $q\in \Ker v$ for all $v$, i.e. whenever $q\in Q_0\times Q_2\times\ldots \times Q_l$. Since $A$ is free as an $M$-module it suffices to consider the case when $A$ is cyclic as an $M$-module, and so we can take $A=M$. Then $b\in A_v$ for all $v\neq v_*$ exactly if $b\in  \mathbb{Z}[q_0, 1/k]\otimes M_2\ldots\otimes M_l$. Let $\tilde{q}=q_0 q_1\ldots q_n\in Q_1$. Then since $a\in {\bigcap}_{0\leq i\leq n} A_{v_i}\circ {q_{v_i}}^{\beta}$ we have $a\circ (\tilde{q})^{-\beta}\in A_{v_i}$ for $0\leq i\leq n$ and so $a\circ (\tilde{q})^{-\beta}\in  \mathbb{Z}[q_0, 1/k]\otimes M_2\ldots\otimes M_l$. Write the component of $a\circ (\tilde{q})^{-\beta}$ in $\mathbb{Z}[q_0, 1/k]$ as $g(q_0)$. Now 
\[a\in A_{v_*}\circ {q_0}^{-(s_{v_*}+\beta)}\subseteq\{b\in A: {v_*}(b)\geq s_{v_*}+\beta\}.\] It follows that $s_{v_*}+\beta\sum_{i=0}^n d_i\leq {v_*}(a\circ (\tilde{q})^{-\beta})<0$, and so the degree of $g(q_0)$ is bounded above by $-(s_{v_*}+\beta\sum_{i=0}^n d_i)=d$. Hence the component $g(q_0)\circ (\tilde{q})^{\beta}$ of $a$ that is a polynomial in $\mathbb{Z}[q_0, 1/k]$ is a $\mathbb{Z}$-linear combination of the elements  
\[(k)^{l_j}{q_0}^{j+\beta}{f_1}^{\beta}\ldots{f_n}^{\beta}\]
where $0\leq j \leq d$ and the $l_j$ are the powers of $k$ appearing in the monomials of $g$. Thus $P\cap\widetilde{M}$ is a free $\mathbb{Z}[1/k]$-module of finite rank $d+1$ and so $P\cap A=P\cap (\mathbb{Z}[q_0, 1/k]\otimes M_2\otimes\ldots\otimes M_l)$ is a finitely generated free $M'=\mathbb{Z}[1/k]\otimes M_2\otimes\ldots\otimes M_l$-module. Hence, $P$ is the split extension of a free $\mathbb{Z}[1/k]\otimes M_2\otimes\ldots\otimes M_l$-module by $Q_0\times Q_2\times\ldots\times Q_l$.

We now perform induction on $l$ to show that $P$ is always of type $FP_m$ and so $G$ is of type $FP_m$, where $m=\min\{rk(Q_i): 1\leq i\leq l\}$.

\begin{thm} Every stabilizer $P$ in $G$ of a cell in the \AA berg complex $Y$ is of type $FP_m$, and so $G$ is of type $FP_m$.
\end{thm}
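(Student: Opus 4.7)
My plan is to prove the theorem by induction on the parameter $l$ from Theorem A. Throughout, I assume that the three key ingredients have already been put in place: the cocompactness of the $G$-action on $Y$, the $(m-1)$-connectedness of $Y$, and the concrete description of the stabilizer $P$ derived above. The theorem then reduces to showing that $P$ itself is of type $FP_m$, whence Brown's criterion (\cite{KSB}, Proposition 1.1) delivers that $G$ is of type $FP_m$.

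For the inductive step, when $l \geq 2$, I would observe that $P$ itself fits the hypotheses of Theorem A with one fewer factor. Relabelling the tensor factors $M_0, M_2, \ldots, M_l$ as $\widetilde{M}_0, \widetilde{M}_1, \ldots, \widetilde{M}_{l-1}$ and the group factors $Q_0, Q_2, \ldots, Q_l$ as $\widetilde{Q}_0, \widetilde{Q}_1, \ldots, \widetilde{Q}_{l-1}$, I would verify that the polynomial, coprimality, and freeness hypotheses all transfer directly: the ideal defining $\widetilde{M}_0$ remains $\langle q_{-1} - k\rangle$, and the generators of each $\widetilde{I}_i$ are inherited from the corresponding $I_{i+1}$, hence still pairwise coprime in $\mathbb{Z}[\widetilde{q}_{i,0}, 1/k]$. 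The inductive hypothesis (Theorem A at parameter $l - 1$) then applies to $P$, yielding that $P$ is of type $FP_{m'}$ where $m' = \min\{rk(Q_i) : 2 \leq i \leq l\}$. Since $m' \geq m$, $P$ is of type $FP_m$ as required.

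The base case $l = 1$ must be handled separately, because here $P$ is a split extension of a finitely generated free $\mathbb{Z}[1/k]$-module by $Q_0 = \langle q_{-1}\rangle$ alone, with $q_{-1}$ acting by multiplication by $k$, and this does not fit a Theorem A datum. My plan is to show directly that $P$ is of type $FP_\infty$, which is stronger than any $FP_m$, by invoking \AA berg's theorem \cite{A86}: since $P$ has finite Pr\"ufer rank (bounded above by $d+2$), the full $FP_m$-conjecture holds for it. It thus suffices to verify $m$-tameness of the underlying module for every $m$, which is an easy direct check. The sphere $S(Q_0)$ contains just the two antipodal classes $[v_+], [v_-]$ with $v_{\pm}(q_{-1}) = \pm 1$; and since $\mathbb{Z}[1/k]^{d+1}$ is finitely generated as a $\mathbb{Z}[q_{-1}^{-1}]$-module (via the standard basis, because $q_{-1}^{-1}$ acts as multiplication by $1/k$) but not as a $\mathbb{Z}[q_{-1}]$-module (since $q_{-1}$ only multiplies by $k$ and cannot produce arbitrarily negative powers), we get $\Sigma^c = \{[v_+]\}$, a single non-antipodal point.

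The main obstacle I anticipate is the base case: invoking \AA berg's theorem on soluble groups of finite Pr\"ufer rank is essential, and the surrounding tameness calculation, though routine, requires careful identification of the characters. The inductive step is essentially bookkeeping once one has recognised the stabilizer $P$ as a Theorem A datum of smaller size.
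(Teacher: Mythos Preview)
Your proposal is correct, and the inductive step matches the paper's argument exactly. The base case $l=1$ is where you diverge: the paper observes that $P=(P\cap A)\rtimes Q_0$ is an HNN-extension with base group and associated subgroups free abelian of rank $d+1$ (presentation $\langle H,t : t^{-1}x_it=x_i^{k}\rangle$), and then invokes Bieri's result (\cite{B76}, Proposition~2.13(b)) to conclude $P$ is of type $FP_\infty$. Your route---finite Pr\"ufer rank, then \AA berg's theorem \cite{A86}, then a direct tameness check showing $\Sigma^c$ is a single point---reaches the same conclusion but uses a much heavier theorem as a black box. The paper's HNN argument is more elementary and self-contained, requiring only a standard permanence property of $FP_\infty$; your argument, on the other hand, keeps everything phrased in the $\Sigma$-invariant language that motivates the whole paper, which has a certain thematic appeal. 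Both are perfectly valid, and the tameness calculation you give is correct (with the trivial adjustment that when $k=1$ the module is free abelian and $\Sigma^c=\emptyset$, which is even easier).
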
 
\begin{proof} First suppose $l=1$. Then $P$ is the split extension of a free $\mathbb{Z}[1/k]$-module of finite rank $d+1$ by the infinite cyclic group $Q_0$. Since the group $\mathbb{Z}[1/k]\rtimes Q_0$ has the presentation $\langle x,t: t^{-1}xt=x^k\rangle$ we deduce that $P=(P\cap A)\rtimes Q_0$ has the presentation
\[\langle H, t: t^{-1}x_i t={x_i}^{k} \textrm{ for }0\leq i\leq d\  \rangle\]
where $H=\langle x_0,\ldots,x_d\rangle$ is free abelian group of rank $d+1$. Hence $P$ is an HNN-extension of $H$ with stable letter $t$, and the base group $H$ and associated subgroups are free abelian of finite rank. It follows from \cite[Proposition 2.13(b)]{B76} that $P$ is of type $FP_\infty$ and so is certainly of type $FP_m$. Hence by Brown's criterion $G$ is of type $FP_m$.

We now assume that Theorem A holds when $M'$ is a tensor product of $\mathbb{Z}[1/k]$ and $l-1$ other components. In particular then $P$ is of type $FP_{m'}$, where $m'=\min\{rk(Q_i): 2\leq i\leq l\}$, and so $P$ is of type $FP_m$, since $m\leq m'$. This completes the proof of Theorem A.
\end{proof}

Hence we have proved that $G_n$ is of type $FP_{n+1}$. To see that this statement is sharp, we appeal to \cite[Theorem B]{K96} where it is proved that the `only if' direction of the Bieri--Groves conjecture holds in the split extension case. This implies that $A$ is $(n+1)$-tame. However, using the fact that the $v_i$ (as characters of $Q$) are restrictions of valuations, we deduce from \cite[Theorem 2.1]{BS81} that it is not $(n+2)$-tame, since \[{v_*}+v_0+d_1v_1+\ldots+d_nv_n=0\] and $[v_*], [v_0],\ldots, [v_n]\in\Sigma_A^c$ , and $A\neq A_v$ for all $v\in V$.

\section{Group extensions}
Suppose we are in the $l=1$ case of Theorem A, and write $q_j= q_{1,j}, f_j=f_{1,j}$ and $z_1=n$. Suppose also that $A$ is a cyclic $M$-module; then $A\cong \mathbb{Z}[q_0, q_0^{-1}, f_1^{-1}, \ldots, f_n^{-1}, k^{-1}]$. We shall use the Lyndon-Hochschild-Serre spectral sequence to calculate the second cohomology group $H^2(Q,A)$. First of all note that we have a short exact sequence
\[\langle q_{-1}\rangle=B\longrightarrow Q\longrightarrow C=\langle q_0, q_1,\ldots, q_n\rangle,\] and recall that $q_{-1}$ is acting as multiplication by the integer $k$. In the calculation below we assume that $k\ge2$.

It is clear that $H^0(B,A)=0$.
Since $B$ is infinite cyclic $H^{n}(B,A)=0$ for $n\geq2$, and $H^1(B,A)=H_{0}(B,A)=(\mathbb{Z}/(k-1)\mathbb{Z})[q_0, q_0^{-1}, f_1^{-1},\ldots, f_n^{-1}]$. In order to calculate $H^2(Q,A)$ we must first calculate $H^1(C, H^1(B,A))$. We get another LHS-spectral sequence via the short exact sequence
\[\langle q_0\rangle=X\longrightarrow C\longrightarrow C'=\langle q_1,\ldots, q_n\rangle\] and use the fact that $q_{0}$ acts as multiplication by $q_{0}$.

Now $H^0(X, H^1(B,A))=0$ since only the zero element in $H^1(B,A)$ is fixed under the action of $X$. Moreover we have 
\[ H^1(X, H^1(B,A))=H_{0}(X, H^1(B,A))=(\mathbb{Z}/(k-1)\mathbb{Z})[ f_1(1)^{-1},\ldots, f_n(1)^{-1}].\] The group $H^1(C, H^1(B,A))$, and thus $H^2(Q,A)$, is then given by the fixed points in $(\mathbb{Z}/(k-1)\mathbb{Z})[ f_1(1)^{-1},\ldots, f_n(1)^{-1}]$ under the action of $C'$, that is $H^{0}(C',\mathbb{Z}/(k-1)\mathbb{Z})$. In general it is cyclic of order dividing $k-1$. 
In the application to extensions of $A_{n}$ by $Q_{n}$, we have that $k=n!$ and each $q_{i}\  (i>0)$ may be identified with an integer (specifically, the integer $f_i(1)$) in such a way that $q_i-1$ divides $n!$ (and hence $q_i-1$ becomes a unit modulo $k-1$). Therefore in this case the cohomology group vanishes and every extension of $A_{n}$ by $Q_{n}$ is split.

Thus we have proved Theorem B: what this shows is that the delicacy of the $FP_{m}$ conjecture for non-split extensions is further complicated by the difficulty of finding simple examples where non-split extensions exist and can be used to test the theory.

\subsection{Concluding Remark}
We thank the referee for a very careful reading of this paper and for suggesting many improvements.

\end{document}